\newtheorem{theorem}{Theorem}
\newenvironment{proof}[1][Proof]{\begin{trivlist}
\item[\hskip \labelsep {\bfseries #1}]}{\end{trivlist}}
\newcommand{\bd}{\begin{displaymath}}
\newcommand{\ed}{\end{displaymath}}
\newcommand{\be}{\begin{equation}}
\newcommand{\ee}{\end{equation}}
\newcommand{\bea}{\begin{eqnarray}}
\newcommand{\eea}{\end{eqnarray}}
\newcommand{\bda}{\begin{eqnarray*}}
\newcommand{\eda}{\end{eqnarray*}}
\newcommand{\ba}{\begin{array}}
\newcommand{\ea}{\end{array}}
\newcommand{\ra}{\rightarrow}
\newcommand{\lra}{\longrightarrow}
\newcommand{\dd}{\mbox{\rm\,d}}
\begin{document}

\title{\huge{ \textbf{A theoretical connection between the noisy leaky integrate-and-fire and the escape rate models:
the non-autonomous case.
}}}

\author{Gr\'{e}gory Dumont\footnote{\'{E}cole Normale Sup\'{e}rieure,
Group for Neural Theory, Paris, France, email: gregory.dumont@ens.fr},
          Jacques Henry\footnote{INRIA team Carmen, INRIA Bordeaux Sud-Ouest, 33405 Talence cedex, France,
          email: Jacques.Henry@inria.fr}
     and
        Carmen Oana Tarniceriu\footnote{Intersdisciplinary Research Department - Field Sciences,
        Alexandru Ioan Cuza University of Ia\c{s}i, Romania
e-mail: tarniceriuoana@yahoo.co.uk}
     }


\date{}
\maketitle

\begin{abstract}
One of the most important challenges in mathematical neuroscience is to properly illustrate the stochastic nature of neurons.
Among the different approaches, the noisy leaky integrate-and-fire and the escape rate models are probably the most popular.
These two models are usually chosen to express different noise action over the neural cell. In this paper we investigate the
link between the two formalisms in the case of a neuron subject to a time dependent input. To this aim, we introduce a new
general stochastic framework. As we shall prove, our general framework entails the two already existing ones. Our result has
theoretical implications since it offers a general view upon the two stochastic processes mostly used in neuroscience, upon
the way they can be linked, and explain their observed statistical similarity.
\end{abstract}

{\bf Keywords:}
  Neural noise; Noisy Leaky Integrate-and-Fire model; Escape rate

\section{Introduction}

The noisy aspect of neuronal behavior  \cite{noiseNS} makes the use of stochastic processes unavoidable in mathematical neuroscience.
A generally accepted formalism to express the variability present in neural dynamics has not been found yet, but among the most
notable models dealing with this aspect stand out the noisy leaky integrate-and-fire (NLIF) \cite{Burkitt, Burkitt2},
and the escape rate models \cite{GvH}. Of course, the use
of each of these models comes with its own advantages.

The NLIF - a model going back to Lapique's work in 1907 \cite{B01,abbott01, Lapique} - describes neurons as simple electrical circuits
consisting in a capacitor in parallel with a resistor driven by a noisy input. Written in the language of stochastic differential
equations, it gives rise to a Langevin equation \cite{langevin} completed with a reset mechanism to mimic the onset of an
action potential.
Related to a stochastic variable satisfying the Langevin equation, stands the probability density function (pdf) that expresses
 the likelihood that the state variable takes on a specific value \cite{gardiner}. For the NLIF model,
 its associated pdf follows the well-known Fokker-Planck (FP) equation \cite{gardiner}.

The escape rate models - a formalism proposed by W. Gerstner and J.L. van Hemmen in 1992 \cite{GvH} - introduce a rather different approach, in
which the neuron's dynamics remain deterministic but the generation of an action potential occurs randomly. The firing probability
is then described by a {\em stochastic intensity of firing} or {\em hazard function} which depends on the momentary distance between
 the membrane potential and the formal firing threshold \cite{GvH, gerstner}. In this setting, the neuron can fire even without reaching the formal spiking threshold
  or remain quiescent after crossing it \cite{gerstner}. The corresponding pdf to the escape rate model satisfies a pure transport equation along
   the deterministic trajectories, and a term to express the probabilistic nature of spike initiation is added. The so-obtained pdf
   follows an age structured (AS) system, where the {\em age} variable accounts for the time elapsed since the neuron had its last
   spike \cite{GvH}.

At a first sight, the two stochastic processes reminded above seem essentially different, both in form and in modeling assumptions.
Despite their contrasting nature, it has been observed that they can behave in a surprisingly comparable fashion \cite{plesser}.
The main goal of our paper is therefore to investigate the existence of a common ground between the two formalisms.

In our previous work \cite{usdt,usit}, we made a first step in this direction. We have highlighted an unforseen relationship between
the two modeling frameworks, but, we have left several open questions, especially in the context of time dependent stimuli. To push
further our previous studies, we will now introduce a new stochastic model that describes the evolution of the membrane's potential
of a neuron and its
time passed by since the last firing event. We will show that the system satisfied by the associated pdf generates the two
previously introduced formalisms:  FP equation and AS system.

The paper is structured as follows: We shall first remind the two above mentioned models as they are usually encountered in the
literature. Next, we introduce a two dimensional stochastic description of the neural state, and, starting from there, we show that our general stochastic framework
 entails the two already existing ones. The corresponding stationary system is discussed next and a special solution of the non stationary system is proposed later on. We finish with a discussion on the biological and theoretical implications of our findings.

\section{Two standard models for neural noise}

\subsection{Fokker-Planck equation}
The first process -the NLIF- that we shall refer to
is modeled as a Langevin equation which describes the membrane potential of a neuron in the subthreshold regime.
The subthreshold dynamics are described by:
\begin{equation} \label{LIF}
\frac{\dd}{\dd t} v(t)=(\mu(t)-v) + \sigma \xi(t),
\end{equation}
where $\mu(t)$ is the mean of the external stimuli, $\sigma$ is the noise intensity of the external stimuli, and $\xi$ is a
Gaussian white noise
\begin{equation*}
\langle\xi(t)\rangle=0, \quad \langle\xi(t)\xi(t')\rangle=\delta(t-t').
\end{equation*}
The firing mechanism of the neuron is expressed by a discontinuous reset process which says that
when the membrane potential reaches the threshold value, the potential is reset to a fix given value:

\begin{equation}\label{rLIF}
\mbox{if}\quad v\geq 1 \quad\mbox{then}\quad v\mapsto v_r.
\end{equation}
By associating to the stochastic variable $v$ a pdf $p(t,v)$,
 it is found that the pdf satisfies the FP equation \cite{brunel2015fokker}:
\be\label{FP}
\frac{\partial}{\partial t} p(t,v)+\frac{\partial}{\partial v}[(\mu(t)-v)p(t,v)]
-\frac{\sigma^2}{2}\frac{\partial^2}{\partial v^2}p(t,v)=\delta(v-v_r)r(t),
\ee
where an absorbing boundary condition at the spiking threshold is imposed in order to express the immediate reset of the variable,

\begin{equation*}
p(t,1)=0,
\end{equation*}
and the flux at this boundary gives the {\em firing rate}:
\be\label{frfp}
r(t)=-\frac{\sigma^2}{2}\frac{\partial}{\partial v}p(t,1).
\ee
At the lower boundary, the no-flux condition is imposed:
\be
\lim_{v\ra -\infty} [(v-\mu(t))p(t,v)+\frac{\sigma^2}{2}\frac{\partial}{\partial v}p(t,v)]=0.
\ee
An initial repartition is assumed as known:
\be\label{FPic}
p(0,v)=p_0(v),
\ee
for which the normalization condition is imposed, due to the interpretation of $p$ as a pdf:
\be\label{CPFP}
\int_{-\infty}^1 p_0(v)\dd v=1.
\ee
It can be easily shown that, once the condition (\ref{CPFP}) is assumed, then  the conservation property
of the solution to the system (\ref{FP})-(\ref{FPic}) takes place at any time:
\be\label{CPFP1}
\int_{-\infty}^1 p(t,v)\dd v=1.
\ee

\begin{figure}[t!]
\begin{center}
    \includegraphics[width=\textwidth]{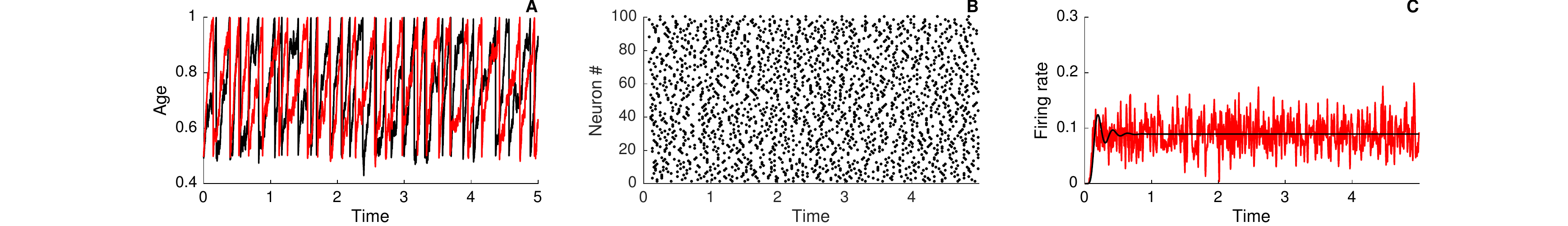}
   \caption{Simulation of the NLIF model (\ref{LIF}), (\ref{rLIF}) and of its pdf given by the FP equation (\ref{FP})-(\ref{FPic}).
    A) Illustration of the noise action onto the membrane potential via a simulation of two realizations of the stochastic
    process (\ref{LIF}), (\ref{rLIF}). B) Spiking activity generated via several realizations of the stochastic
    process (\ref{LIF}), (\ref{rLIF}).
    C) Comparison between the activity extracted from several realizations of the stochastic process (\ref{LIF}), (\ref{rLIF}) in red,
     and a simulation of its pdf given by the FP equation (\ref{FP})-(\ref{FPic}) in black. A gaussian was taken as initial condition;
      the parameters of the simulation are:  $v_r=0.5$, $\mu=3$, $\sigma =0.15$.  }\label{Fig02}
      \end{center}
\end{figure}

In Fig. \ref{Fig02}, a simulation of the NLIF model (\ref{LIF}), (\ref{rLIF}) is presented.
The first panel (Fig. \ref{Fig02}A) corresponds to the simulation of the same neuron under different noise realizations.
 As expected, the corresponding dynamics are different and the neurons
spike at different times. In the second panel (Fig. \ref{Fig02}B), the spiking activity of the same cell is extracted from a bigger
number of trials. Finally, in the last panel (Fig. \ref{Fig02}C) the corresponding spiking activity is compared with the firing rate
obtained from the FP model (\ref{FP})-(\ref{FPic}).

In Fig. \ref{Fig03}, a comparison is made under a stimulus that is time dependent.
The first panel (Fig. \ref{Fig03}A) shows the stimulus' time evolution. In the second panel (Fig. \ref{Fig03}B) the spiking
 activity of the cell is displayed, from which we can extract the firing rate. A comparison with the FP equation is
 shown in the last panel (Fig. \ref{Fig03}C).

\begin{figure}[t!]
\begin{center}
    \includegraphics[width=\textwidth]{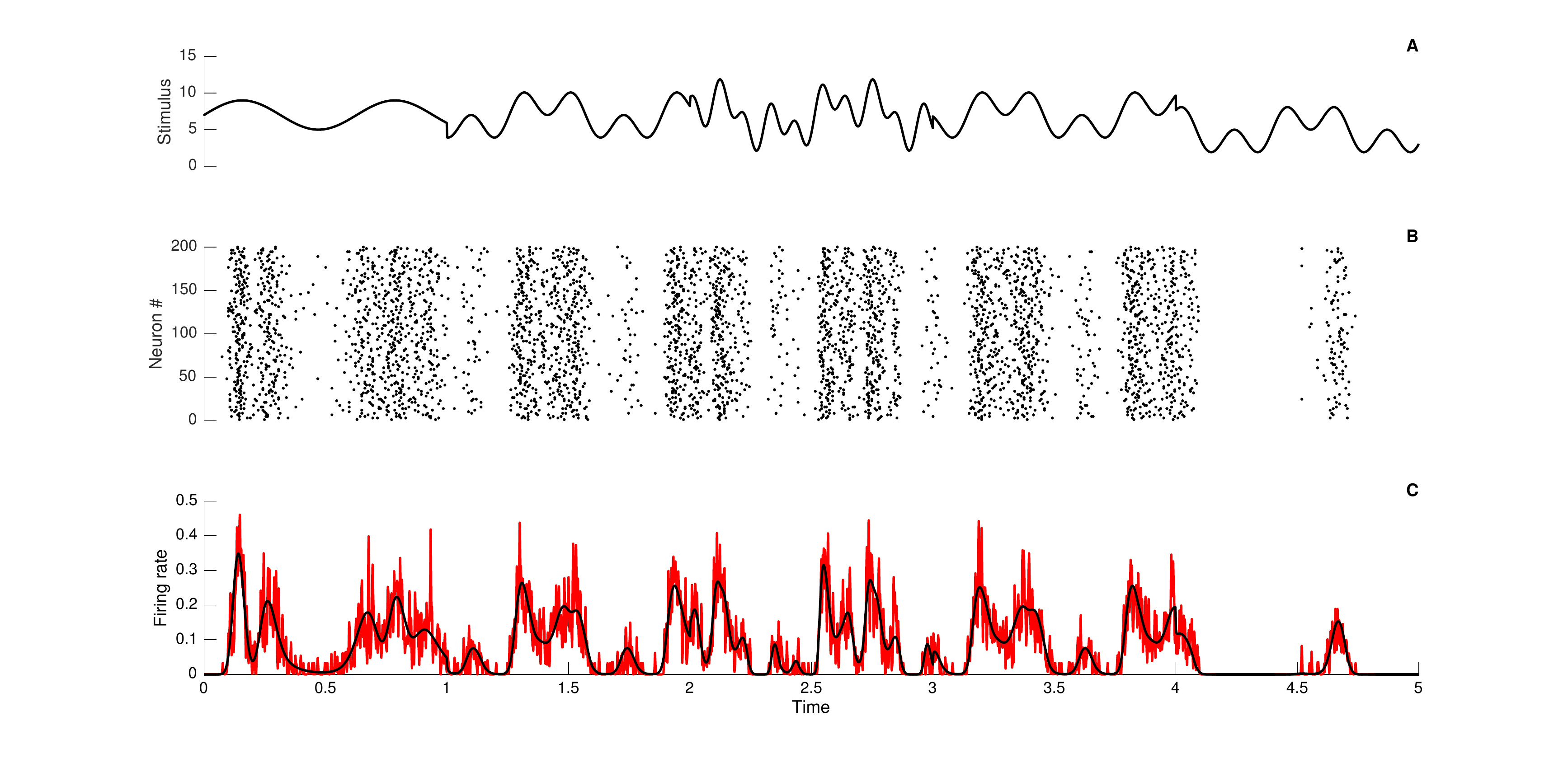}
   \caption{Comparison between the neural activity generated via realizations of the stochastic process (\ref{LIF}), (\ref{rLIF})
 and the FP model (\ref{FP})-(\ref{FPic}) for a time dependent stimulus $\mu(t)$. A) Time evolution of the stimulus.
B) Activity obtained from several realizations of the stochastic process (\ref{LIF}), (\ref{rLIF}). C) Comparison between the firing rate
obtained from the stochastic process (\ref{LIF}), (\ref{rLIF}) in red, and the one given by the  FP model (\ref{FP})-(\ref{FPic}) in black. A gaussian was taken as
initial condition; the parameters of the simulation are: $v_r=0.5$, $\sigma =0.2$.  }\label{Fig03}
      \end{center}
\end{figure}

\subsection{Age-structure formalism}
The escape rate models consider a deterministic trajectory combined with a probabilistic firing mechanism \cite{GvH}. Implicitly,
the state variable will depend only on the last firing time. Without restraining the generality, we can consider the state variable
for the deterministic trajectories as the time elapsed since the last spike, that we will generically call {\em age}:
\be\label{a}
\frac{\dd}{\dd t}a(t)=1.
\ee
The probability that an action potential occurs during a small time interval is computed via the {\em hazard function $S(t,a)$}
that gives the probability of spike of a neuron that has at time t the age $a$.
More precisely, during the time interval $(t,t+dt)$, a spike occurs with probability $S(t,a(t))dt$ and, consequently,
 the age is reset to zero immediately after:

\begin{equation}\label{r}
\mbox{ If the spike occurs at time $t$  then } a\mapsto 0.
\end{equation}
The associated probability density function, $n(t,a)$, satisfies the following PDE \cite{GvH, gerstner}:
\be\label{AS}
\frac{\partial}{\partial t}n(t,a)+\frac{\partial}{\partial a} n(t,a)+S(t,a)n(t,a)=0,
\ee
where the first two terms give the pure transport along the deterministic trajectories (\ref{a}) and the last term
accounts for the probability of spiking.

The reset mechanism is modeled as a non-local boundary condition:
\be\label{fra}
n(t,0)=r(t)=\int_0^\infty S(t,a)n(t,a)\dd a,
\ee
where the right hand side of the above equation defines the firing rate of the system.

As before, an initial repartition is
taken as known:

\be\label{ASic}
n(0,a)=n_0(a).
\ee
The conservation property of the system (\ref{AS})-(\ref{ASic}) takes place as well. Namely,
\begin{equation*}
\int_0^\infty n(t,a)\dd a=1,
\end{equation*}
for any $t$, as soon as
\begin{equation*}
\int_0^\infty n_0(a)\dd a=1.
\end{equation*}

\begin{figure}[t!]
\begin{center}
    \includegraphics[width=\textwidth]{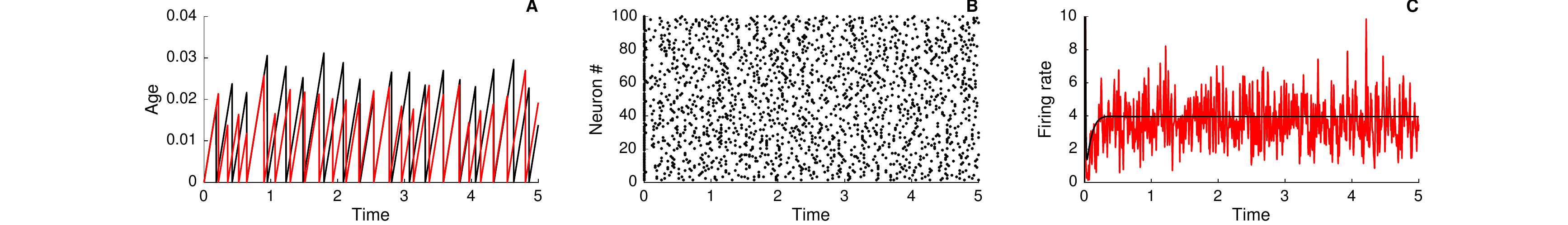}
   \caption{Simulation of the stochastic process (\ref{a}), (\ref{r}) and of its associated AS system (\ref{AS})-(\ref{ASic}).
   A) Illustration of the noise action onto the age dynamics (\ref{a}), (\ref{r}).
   B) Spiking activity generated via several realizations of
   the stochastic process (\ref{a}), (\ref{r}).
   C) Comparison between the activity extracted from several realizations of the stochastic process
   (\ref{a}), (\ref{r}) in red, and a simulation of the firing rate given by its associated AS system (\ref{AS})-(\ref{ASic}) in black.
   A gaussian was taken as initial condition; the functions used and the parameters of the simulation are:
   $S(a)=(\exp(h-V(a))$, with $V(a)=-\log(1-\exp(-a/\tau))$ and  $\tau=30$, $h=3$.  }\label{Fig05}
      \end{center}
\end{figure}

We present in  Fig. \ref{Fig05} a simulation of the model (\ref{AS})-(\ref{ASic}).
Again, we have made a comparison between the stochastic process (red curve) given by (\ref{a}), (\ref{r}) and the evolution
in time of the density function (black curve) given by the system (\ref{AS})-(\ref{ASic}).  The first panel (Fig. \ref{Fig05}A)
 corresponds to the simulation of the same neuron. As expected, due to the probabilistic firing process, the corresponding dynamics
 have different spike times.
 In the second panel (Fig. \ref{Fig05}B), the spiking activity of the same cell is extracted from a large number of trials.
 In the last panel (Fig. \ref{Fig05}C) the corresponding spiking activity is compared with the firing rate given by the AS system (\ref{fra}).

In Fig. \ref{Fig06}, a numerical simulation of the stochastic process defined
by (\ref{a}), (\ref{r}) for the same age-dependent death rate is illustrated. Note that the neuron never
fires exactly at the same age, since its probability to fire (escape) is purely
stochastic. The first panel (Fig. \ref{Fig06}A) shows the time evolution of the stimulus. In the second panel
(Fig. \ref{Fig06}B), the spiking activity of the cell is shown, from which  the firing rate is extracted.
A comparison between the firing rate directly computed and the one extracted from the AS system is presented
in the last panel (Fig. \ref{Fig06}C).

\begin{figure}[t!]
\begin{center}
    \includegraphics[width=\textwidth]{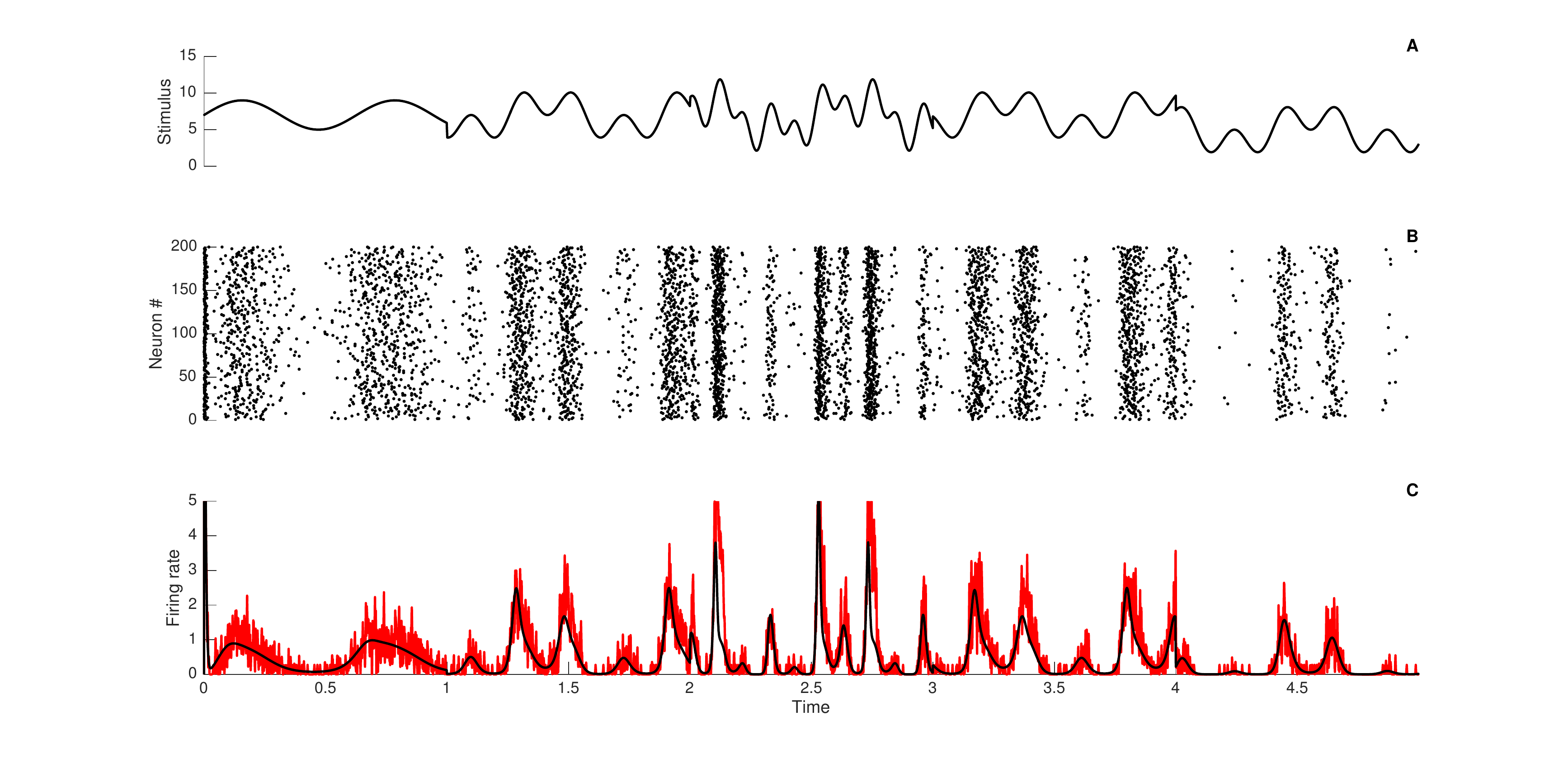}
   \caption{Comparison between the neural activity generated via realizations of the stochastic process (\ref{a}), (\ref{r})
    and the AS system (\ref{AS})-(\ref{ASic}) for a time dependent stimulus. A) Time evolution of the stimulus.
B) Activity obtained from several realizations of the stochastic process (\ref{a}).
C) Comparison between the firing rate obtained from the stochastic process (\ref{a}), (\ref{r}) in red, and the one given by the
AS system (\ref{AS})-(\ref{ASic}) in black. A gaussian was taken as initial condition; the functions and the parameters used in
 the simulation are:
  $S(t,a)=(\exp(h(t)-V(a))$, with $V(a)=-\log(1-\exp(-a/\tau))$ and  $\tau=30$. Here $h(t)$ represents the
  stimulus that is shown in the first panel. }\label{Fig06}
      \end{center}
\end{figure}

\section{An age and potential structured model. Connections with FP and AS systems}
\subsection{The age-and-potential structured model}
We shall introduce below a more general process which can be seen as a generalization of the two reminded formalisms.
Let us consider a stochastic process that models both variables, i.e. spike times and membrane potential.
More precisely, let us consider that, between two consecutive firing times, the potential variable evolves according to
the subthreshold dynamics given by the NLIF model (\ref{LIF}) and the age of the neuron grows linearly with time
according to the dynamics given by (\ref{a}). We suppose that the neuron will fire whenever a given potential threshold value
is reached. Whenever this happens, both state variables are reset: the potential is reset to $v_r$ and the age to zero.
We deal therefore with the following two-dimensional stochastic process:

\be\label{sds}
\left\{\begin{array}{ll}\frac{\dd}{\dd t} v(t)= (\mu(t)-v(t)) +\sigma \xi(t),\\
\frac{\dd}{\dd t} a(t)=1,\end{array}\right.
\ee
completed by the firing mechanism:
\be\label{firec}
\mbox{if}\quad v\geq 1 \quad\mbox{then}\quad v\mapsto v_r\quad \mbox{and}\quad a\mapsto 0.
\ee
As in the previous section, $\mu(t)$ is the mean of the external stimulus, $\sigma$ is the noise intensity
of the external stimulus, and $\xi$ is a again Gaussian white noise
\begin{equation*}
\langle\xi(t)\rangle=0, \quad
\langle\xi(t)\xi(t')\rangle=\delta(t-t').
\end{equation*}
The corresponding probability density
function satisfies a two-dimensional Fokker-Planck equation (see \cite{gardiner}):
\be\label{model2'}
\frac{\partial }{\partial t}\pi(t,a,v)+\overbrace{\frac{\partial }{\partial a}\pi(t,a,v)
+\frac{\partial }{\partial v}[(\mu(t) -v)\pi(t,a,v)]}^{\text{Drift}}
- \overbrace{\frac{ \sigma ^2}{2}     \frac{\partial^2 }{\partial v^2}\pi(t,a,v)}^{\text{Diffusion}}  = 0.
\ee
For the right choice of the boundary conditions, we turn again to the stochastic system (\ref{sds}), (\ref{firec}).
Namely, corresponding to the firing mechanism, we should impose an absorbing boundary condition at the firing threshold:
\bd
\pi(t,a,1)=0,
\ed
and a reflecting boundary at the boundary $v=-\infty$:

\bda
&&\lim\limits_{v \to -\infty} (-\mu +v)\pi(t,a,v)
+\frac{ \sigma ^2}{2}\frac{\partial }{\partial v}\pi(t,a,v) =0.\\
\eda
We will also assume that
\bd
\lim\limits_{a \to +\infty} \pi(t,a,v) =0.\\
\ed
The flux at the firing threshold is given by
\be\label{rho}
\rho(t,a)=-\frac{ \sigma ^2}{2}   \frac{\partial }{\partial v}\pi(t,a,1),
\ee
and, since once firing occurs  both membrane potential and the age of a neuron are reset in the reset value $(0,v_r)$,
 the following singular source term arises:

\be\label{fr}
\pi(t,0,v)=\delta(v-v_r)\int_0^\infty \rho(t,a)\dd a:= \delta(v-v_r) r(t).
\ee
Finally, an initial distribution is assumed as known:
\be\label{icm1}
\pi(0,a,v) = \pi_0(a,v).
\ee
The model is now complete, and one can check directly by integration over the state space that,
if the initial distribution satisfies
\begin{equation*}
\int_{-\infty}^1\int_0^\infty \pi_0(a,v)\dd a \dd v = 1,
\end{equation*}
then, for any $t>0$:
\begin{equation*}
\int_{-\infty}^1\int_0^\infty \pi(t,a,v)\dd a \dd v = 1.
\end{equation*}

\begin{figure}[]
\begin{center}
    \includegraphics[width=\textwidth]{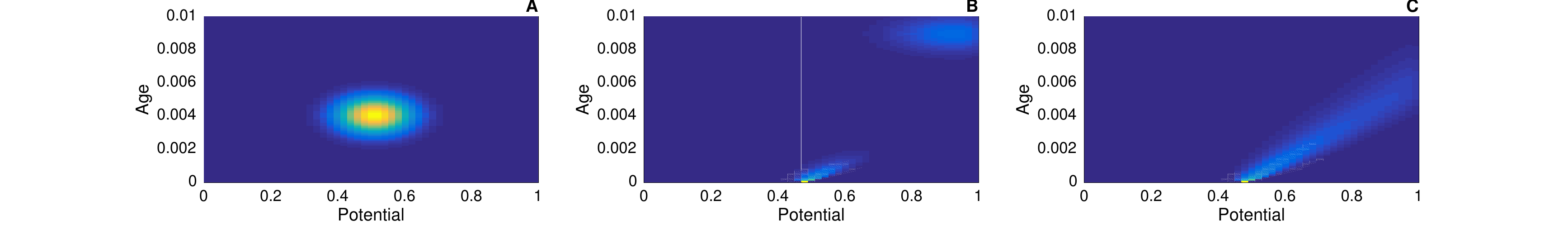}
   \caption{Simulation of the age-potential probability density function (\ref{model2'})-(\ref{icm1}).
    A two dimensional gaussian (age-potential) was taken as initial condition; the parameters of the simulation are:
   $v_r=0.5$, $\mu=20$, $\sigma =0.4$. The plots show the evolution in time of the solution, respectively $t=0$,
   $t=0.5$ $t=7$ for the respective panel A-B-C.  }\label{Fig07}
      \end{center}
\end{figure}

In Fig. \ref{Fig07}, a simulation of the age-potential pdf is shown.
The different panels (A-B-C) show the evolution in time of the density. The simulation starts
with a Gaussian as initial condition (the first panel of Fig. \ref{Fig07}). Under the influence of the drift term,
the density function advances in  age, which is clearly seen in the plots
of Fig. \ref{Fig07}. After the spiking process, the age of the neuron is reset to zero and its membrane potential is reset to $v_r$.
This effect is well perceived
 in the second panel of Fig. \ref{Fig07}.

\subsection{Theoretical connections}

We are ready to prove now the main result of our paper which shows that both models reminded in the previous sections,
i.e. FP model (\ref{FP})-(\ref{FPic}) respectively the AS model (\ref{AS})-(\ref{ASic}), can be obtained from our
two-dimensional system. We stress that the AS system derived from the age-and-potential system (\ref{model2'})-(\ref{icm1})
has a special age-dependent death rate, as it can be seen below.

\subsubsection{Derivation of the Fokker-Planck model}
The following result takes place:
\begin{theorem}\label{DFP}
Let $\pi$ a solution to the system (\ref{model2'})-(\ref{icm1}). Then, the probability density function defined as
\be\label{potential}
p(t,v)=\int_{0}^\infty \pi(t,a,v)\dd a,
\ee
is the solution to the FP system (\ref{FP})-(\ref{FPic}) with the initial repartition given by
\bd
p_0(v)=\int_0^\infty \pi_0(a,v)\dd a.
\ed
\end{theorem}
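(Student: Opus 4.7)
The natural approach is to integrate the two-dimensional Fokker-Planck equation (\ref{model2'}) with respect to $a$ over $(0,\infty)$ and identify the resulting equation with (\ref{FP}). The plan is to treat each term separately, the only subtle one being the transport term in $a$, whose boundary contributions must reproduce the source term $\delta(v-v_r)r(t)$ on the right-hand side of (\ref{FP}).

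First I would commute the time derivative with the integral to obtain $\partial_t p(t,v)$, and similarly commute the $v$-derivatives with the integral to obtain $\partial_v[(\mu(t)-v)p(t,v)]$ and $\tfrac{\sigma^2}{2}\partial_v^2 p(t,v)$, using the decay of $\pi$ as $a\to\infty$ to justify the interchange. The key step is the treatment of the transport term: by the fundamental theorem of calculus,
\begin{equation*}
\int_0^\infty \frac{\partial}{\partial a}\pi(t,a,v)\,\dd a \;=\; \lim_{a\to\infty}\pi(t,a,v)-\pi(t,0,v) \;=\; -\delta(v-v_r)\,r(t),
\end{equation*}
where the first limit vanishes by the assumed decay at infinity and the boundary value at $a=0$ is substituted from (\ref{fr}). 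Collecting everything and moving this term to the right-hand side yields precisely (\ref{FP}).

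Next I would check the auxiliary conditions. The absorbing condition $p(t,1)=0$ follows immediately by integrating $\pi(t,a,1)=0$ in $a$. The no-flux condition at $v=-\infty$ follows by integrating the corresponding no-flux condition for $\pi$, again after swapping limit and integral. The initial condition $p(0,v)=p_0(v)$ is direct from the definition. Finally, I would verify the firing-rate identity (\ref{frfp}) by differentiating (\ref{potential}) in $v$ at $v=1$ and interchanging with the integral:
\begin{equation*}
-\frac{\sigma^2}{2}\frac{\partial}{\partial v}p(t,1)\;=\;\int_0^\infty\!\!\Bigl(-\frac{\sigma^2}{2}\frac{\partial}{\partial v}\pi(t,a,1)\Bigr)\dd a\;=\;\int_0^\infty \rho(t,a)\,\dd a\;=\;r(t),
\end{equation*}
which shows that the $r(t)$ appearing as the coefficient of $\delta(v-v_r)$ on the right-hand side is indeed the firing rate associated with $p$.

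The main obstacle I anticipate is not algebraic but a matter of carefully justifying the exchanges of integration with differentiation (in $t$ and $v$) and the vanishing of the boundary term at $a=\infty$; these rely on regularity and decay assumptions on $\pi$ that are tacit in the modeling setup. Once those are granted, the calculation is essentially a one-line reduction driven by the reset boundary condition (\ref{fr}), which is precisely what converts the conservative transport in $a$ into the localized source at $v=v_r$ of the classical Fokker-Planck formulation.
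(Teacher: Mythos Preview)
Your proposal is correct and follows essentially the same route as the paper: integrate (\ref{model2'}) in $a$ over $(0,\infty)$, use the decay at $a\to\infty$ and the reset condition (\ref{fr}) to turn the $\partial_a$-term into the source $\delta(v-v_r)r(t)$, and then verify the boundary, initial, and firing-rate conditions exactly as you outline. The paper performs the computation at a formal (distributional) level, so your remark about the tacit regularity and decay assumptions is well placed but not something the paper addresses beyond a brief caveat.
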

\emph{Remark.} Here and below, the solutions are defined in distributional sense. We did choose though to keep the computations at a formal level so that to render the reading easy.
\begin{proof}
The proof is simple and straightforward. Let us integrate the equation (\ref{model2'}) with
respect to $a$ over the age interval:

\bda
&&\int_0^\infty\frac{\partial}{\partial t} \pi(t,a,v)\dd a+
\int_0^\infty \frac{\partial}{\partial a}\pi(t,a,v)\dd a\\
&&+\int_0^\infty \frac{\partial}{\partial v}[(\mu(t)-v) \pi(t,a,v)]\dd a-
\int_0^\infty \frac{\sigma^2}{2} \frac{\partial^2 }{\partial v^2}\pi(t,a,v)\dd a=0,
\eda
which implies, by using the boundary condition at $a\lra\infty$:
\bda
&&\frac{\partial}{\partial t} \int_0^\infty\pi(t,a,v)\dd a-
\pi(t,0,v)+
\frac{\partial}{\partial v}\left[(\mu(t)-v) \int_0^\infty \pi(t,a,v)\dd a\right]\\
&&-\frac{\sigma^2}{2} \frac{\partial^2 }{\partial v^2}\int_0^\infty \pi(t,a,v)\dd a=0.
\eda
Now, using the boundary condition (\ref{fr})
and denoting by
$$p(t,v)=\int_0^\infty \pi(t,a,v)\dd a, $$
it
follows that, indeed, $p$ is solution to

\bd
\frac{\partial}{\partial t} p(t,v)+
\frac{\partial}{\partial v}[(\mu(t)-v) p(t,v)]-
\frac{\sigma^2}{2} \frac{\partial^2 }{\partial v^2}p(t,v)=\delta(v-v_r)r(t),
\ed
where
\bda
r(t)&=&\int_0^\infty \rho(t,a)\dd a=\int_0^\infty -\frac{\sigma^2}{2}\frac{\partial}{\partial v}\pi(t,a,1)\dd a\\
&=&-\frac{\sigma^2}{2}\frac{\partial}{\partial v}\int_0^\infty\pi(t,a,1)\dd a\\
&=&-\frac{\sigma^2}{2}\frac{\partial}{\partial v}p(t,1).
\eda
The boundary condition is obtained by:
\bd
p(t,1)=\int_0^\infty \pi(t,a,1)\dd a=0,
\ed
and the initial condition is defined as
\bd
p_0(v)=p(0,v)=\int_0^\infty \pi_0(a,v)\dd a.
\ed
We end the proof by noticing that, since the conservation property of the probability density $\pi(t,a,v)$ is assumed,
we get that
\bd
\int_{-\infty}^1 p(t,v)\dd v=1,
\ed
as soon as
\bd
\int_{-\infty}^1 p_0(v)\dd v=1,
\ed
is fulfilled.
The proof is now complete.

\end{proof}

\subsubsection{Derivation of the age-structured model}

The following result takes place:
\begin{theorem}\label{DAS}
Let $\pi$ a solution to the system (\ref{model2'})-(\ref{icm1}). Then, the probability density function defined as
\be\label{age}
n(t,a)=\int_{-\infty}^1 \pi(t,a,v)\dd v,
\ee
is the solution to the AS system (\ref{AS})-(\ref{ASic}) with the initial repartition given by
\bd
n_0(a)=\int_{-\infty}^1 \pi_0(a,v)\dd v.
\ed
\end{theorem}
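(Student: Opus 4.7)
The plan is to mirror the derivation of the Fokker--Planck reduction (Theorem \ref{DFP}) but now integrating out the voltage variable instead of the age. Specifically, I will integrate equation (\ref{model2'}) with respect to $v$ over $(-\infty,1]$ and show that each of the four terms reduces to the desired piece of the AS equation, with the hazard term appearing as a residual boundary flux.

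First, the time and age derivatives commute with the integral and immediately produce $\partial_t n(t,a)$ and $\partial_a n(t,a)$. The drift term in $v$ integrates to a boundary contribution
\bd
\big[(\mu(t)-v)\pi(t,a,v)\big]_{v=-\infty}^{v=1} = (\mu(t)-1)\pi(t,a,1) - \lim_{v\to-\infty}(\mu(t)-v)\pi(t,a,v),
\ed
and the diffusion term similarly produces $-\frac{\sigma^2}{2}\partial_v\pi(t,a,1) + \frac{\sigma^2}{2}\lim_{v\to-\infty}\partial_v\pi(t,a,v)$. Using the absorbing condition $\pi(t,a,1)=0$, the boundary terms at $v=1$ collapse to precisely the flux $\rho(t,a)$ defined in (\ref{rho}); using the reflecting boundary at $v=-\infty$, the limits at $-\infty$ cancel. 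The integrated equation becomes
\bd
\frac{\partial}{\partial t} n(t,a) + \frac{\partial}{\partial a} n(t,a) + \rho(t,a) = 0.
\ed

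The key conceptual step is now to identify $\rho(t,a)$ with a hazard-rate term $S(t,a)\,n(t,a)$. Since both $\rho$ and $n$ are determined by $\pi$, we simply \emph{define}
\bd
S(t,a) := \frac{\rho(t,a)}{n(t,a)},
\ed
and this recovers the AS equation (\ref{AS}) with a specific, solution-dependent age-dependent death rate, exactly as announced in the remark preceding the theorem. I expect this to be the main point requiring care: the hazard function is not prescribed a priori but emerges from the two-dimensional model, so one should emphasize that the identification is well-defined wherever $n(t,a)>0$ and that the reduction is therefore consistent with (\ref{AS}) rather than circular.

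Finally, the non-local boundary condition follows by setting $a=0$ in the definition of $n$ and using (\ref{fr}):
\bd
n(t,0)=\int_{-\infty}^1 \pi(t,0,v)\dd v = \int_{-\infty}^1 \delta(v-v_r)\,r(t)\dd v = r(t),
\ed
and substituting $\rho(t,a)=S(t,a)n(t,a)$ into the definition $r(t)=\int_0^\infty\rho(t,a)\dd a$ recovers exactly (\ref{fra}). The initial condition $n_0(a)=\int_{-\infty}^1\pi_0(a,v)\dd v$ is immediate from (\ref{icm1}), and conservation of mass for $n$ follows from the conservation property of $\pi$ by Fubini. The whole argument is formal in the sense indicated by the authors' remark, carried out in the distributional framework.
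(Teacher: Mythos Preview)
Your proposal is correct and follows essentially the same route as the paper: integrate (\ref{model2'}) in $v$, use the absorbing and reflecting boundary conditions so that only the flux $\rho(t,a)$ survives, then \emph{define} $S(t,a)=\rho(t,a)/n(t,a)$ to recover (\ref{AS}). Your treatment is in fact slightly more explicit about the boundary cancellations, and you obtain $n(t,0)=r(t)$ directly from (\ref{fr}) rather than via the conservation property as the paper does, but these are cosmetic differences.
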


\begin{proof}

 By integrating (\ref{model2'}) with respect to $v$ over the respective domain, one gets:
 \be\label{a1}
 \frac{\partial}{\partial t}\int_{-\infty}^1 \pi(t,a,v)\dd v+ \frac{\partial}{\partial a}\int_{-\infty}^1 \pi(t,a,v)\dd v+
 \rho(t,a)=0
 \ee
 where, as defined above,
 \bd
 \rho(t,a)=-\frac{\sigma^2}{2}\frac{\partial}{\partial v}\pi(t,a,1).
 \ed
The conservation property imposed on the system in $\pi$ will necessary lead, when integrating (\ref{a1}) with respect to $a$ over
the age domain, to:

\be
n(t,0)=\int_0^\infty \rho(t,a)\dd a.
\ee
As before, the initial condition is defined as

\be\label{ica}
n(0,a)=\int_{-\infty}^1 \pi_0(a,v)\dd v.
\ee

The system (\ref{a1})-(\ref{ica}) is an equivalent form of the system introduced in the first section. To see this,
there should be reminded the significance of the terms in the initial system.

We start by reminding that $S(t,a)$ was defined as the decay rate of the survivor function, which, in turn was
defined as the integral over whole potential space of a neuron that had the last spike at $t-a$; notice that, in terms
of our system, this is nothing else than $n(t,a)$. We therefore can force the notation into our system,
and write down:
\bd
S(t,a)=\frac{\rho(t,a)}{n(t,a)},
\ed
to arrive to the popularized version of the equation that gives the evolution with respect to the time
elapsed since the last spike:
\be\label{a3}
\frac{\partial}{\partial t}n(t,a)+\frac{\partial}{\partial a}n(t,a)+ S(t,a) n(t,a) =0
\ee
where
\be\label{fra2}
n(t,0)=\int_0^\infty S(t,a)n(t,a)\dd a.
\ee
The proof is now complete.

\end{proof}

\subsection{The stationary system}

Considering the probability that a neuron reaches a potential value $v$ at time $a$, given that at the initial time was in the
state $v_r$, this probability is shown to satisfy the following problem \cite{risken84, tuckwell88}:
\be\label{fpt}
 \frac{\partial}{\partial a}\varphi(a,v)+ \frac{\partial}{\partial v}[(\mu-v)\varphi(a,v)]
 -\frac{\sigma^2}{2}\frac{\partial^2}{\partial v^2}\varphi(a,v)=0,
 \ee
 with the initial condition given by
 \be
 \varphi(0,v)=\delta(v-v_r).
 \ee
The model is completed by an absorbing boundary condition at the firing threshold
 \be
 \varphi(a,1)=0,
 \ee
 and a reflecting boundary condition at the lower boundary of the domain:
\bea\label{1}
\lim\limits_{v \to -\infty} (-\mu +v)\varphi(a,v) +\frac{ \sigma ^2}{2}\frac{\partial }{\partial v}\varphi(a,v) =0.
\eea

This problem, known as the first passage time problem, can be therefore thought to express the neuronal probability
density function's dynamics on an inter-spike interval. It is due to this reason that we have kept here the notation
$a$ for the time variable, since, in the context introduced above, this time variable is interpreted as the time elapsed
since the last spike. All these considerations are made under the assumption that the stimulus $\mu$ is time-independent.
As it is well known, the flux at the firing threshold
\be\label{isi}
ISI(a)=-\frac{\sigma^2}{2}\frac{\partial}{\partial v}\varphi(a,1),
\ee
has the interpretation of the inter-spike distribution density function,
and the corresponding {\em survivor function} is given by:
\be\label{sf}
P(a)=\int_{-\infty}^1 \varphi(a,v)\dd v.
\ee

Denoting now by $\bar \pi$ the solution to the corresponding stationary system to the model (\ref{model2'})-(\ref{icm1}), i.e.
\be\label{stationary}
 \frac{\partial}{\partial a}\bar\pi(a,v)+ \frac{\partial}{\partial v}[(\mu-v)\bar\pi(a,v)]
 -\frac{\sigma^2}{2}\frac{\partial^2}{\partial v^2}\bar\pi(a,v)=0,
 \ee
 \be
 \bar\pi(0,v)=r\delta(v-v_r),
 \ee
 \be
\bar\pi(a,1)=0,
 \ee
\bea
\lim\limits_{v \to -\infty} (-\mu +v)\bar\pi(a,v) +\frac{ \sigma ^2}{2}\frac{\partial }{\partial v}\bar\pi(a,v) =0,
\eea
we obtain that it is given by
\be\label{stat}
\bar\pi(a,v)= r \varphi(a,v)
\ee
with
\bd
r=\int_0^\infty \rho(a)\dd a=-\frac{\sigma^2}{2}\frac{\partial}{\partial v}\bar \pi(a,1),
\ed
the stationary firing rate.

It is easy to see that, by integrating (\ref{stat}) with
respect to $v$ and $a$, respectively, on the corresponding state spaces, we obtain that
the distributions defined by:
\bd
\bar n(a)=\int_{-\infty}^1 \bar \pi(a,v)\dd v= rP(a),
\ed
respectively,
\bd
\bar p(v)=\int_0^\infty \bar \pi(a,v)\dd a=r\int_0^\infty \varphi(a,v)\dd a,
\ed
satisfy the corresponding stationary systems to AS and FP models.

In the case of the FP and AS models (\ref{FP})-(\ref{FPic}), respectively, (\ref{AS})-(\ref{ASic}), with
a time-independent stimulus $\mu$, we have found in \cite{usdt, usit} a way to map the solutions one-to-another
via integral transforms.

In this particular case, the following result takes place:

\begin{theorem}\label{homogeneous}
Let us consider the system (\ref{model2'})-(\ref{icm1}) with a constant in time stimulus $\mu(t)\equiv\mu$. Then
the system admits a separable solution
\be\label{sepsol}
\pi(t,a,v)=\frac{\varphi(a,v)n(t,a)}{P(a)},
\ee
where $\varphi$ is the solution to (\ref{fpt})-(\ref{1}) and $n(t,a)$ is the solution to
\be\label{TN}
\frac{\partial}{\partial t} n(t,a)+\frac{\partial}{\partial a} n(t,a)+S(a)n(t,a)=0
\ee
with
\be
n(t,0)=\int_0^\infty S(a)n(t,a)\dd a.
\ee
Here,
\bd
S(a)=\frac{ISI(a)}{P(a)}
\ed
with $ISI(a)$ and $P(a)$ defined by (\ref{isi}), (\ref{sf}) respectively.
\end{theorem}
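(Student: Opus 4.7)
The plan is to verify directly that the ansatz $\pi(t,a,v) = \varphi(a,v)\,n(t,a)/P(a)$ solves the autonomous version of (\ref{model2'})--(\ref{icm1}) by substitution, and to extract the transport--decay PDE satisfied by $n$ along the way. Setting $N(t,a) := n(t,a)/P(a)$, I observe that the $v$-drift and diffusion operators in (\ref{model2'}) treat $N$ as a constant, so substituting $\pi = \varphi N$ into the left-hand side of (\ref{model2'}) with $\mu(t)\equiv\mu$ collapses to
\[
\varphi\,\partial_t N + \varphi\,\partial_a N + N\Bigl[\partial_a\varphi + \partial_v((\mu-v)\varphi) - \tfrac{\sigma^2}{2}\partial_v^2\varphi\Bigr].
\]
The bracket is exactly the left-hand side of (\ref{fpt}) and therefore vanishes, reducing the equation to the pure transport identity $\partial_t N + \partial_a N = 0$. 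Applying the product rule to $N = n/P$ then yields $\partial_t n + \partial_a n = n\,P'(a)/P(a)$, i.e.\ (\ref{TN}) with $S(a) = -P'(a)/P(a)$.

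The next step will be to show that this $S$ coincides with the stated $ISI(a)/P(a)$. Differentiating $P(a) = \int_{-\infty}^1 \varphi(a,v)\,\dd v$ under the integral sign and using (\ref{fpt}) gives
\[
P'(a) = \int_{-\infty}^1 \Bigl[-\partial_v((\mu-v)\varphi) + \tfrac{\sigma^2}{2}\partial_v^2\varphi\Bigr]\,\dd v.
\]
Upon integration, the reflecting condition (\ref{1}) annihilates the flux at $v\to -\infty$, while the absorbing condition $\varphi(a,1)=0$ kills the drift contribution at the upper end, leaving $P'(a) = \tfrac{\sigma^2}{2}\partial_v\varphi(a,1) = -ISI(a)$ by definition (\ref{isi}). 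Hence $S(a) = ISI(a)/P(a)$, matching the theorem.

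It remains to check the boundary and reset conditions on $\pi$. The absorbing condition $\pi(t,a,1)=0$ and the reflecting condition at $v\to -\infty$ are inherited verbatim from $\varphi$, and the decay at $a\to\infty$ follows from the corresponding tail behavior of $\varphi$. For the reset (\ref{fr}), I would use $\varphi(0,v)=\delta(v-v_r)$ together with $P(0)=\int_{-\infty}^1 \delta(v-v_r)\,\dd v=1$ to obtain $\pi(t,0,v) = \delta(v-v_r)\,n(t,0)$, and then identify $n(t,0)$ with the total reset flux via
\[
\rho(t,a) = -\tfrac{\sigma^2}{2}\partial_v\pi(t,a,1) = \frac{n(t,a)}{P(a)}\,ISI(a) = S(a)\,n(t,a),
\]
so that $r(t) = \int_0^\infty \rho(t,a)\,\dd a = \int_0^\infty S(a)\,n(t,a)\,\dd a = n(t,0)$, closing the coupled system consistently with (\ref{fra2}).

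The step I expect to require the most care is the identification $-P'(a)/P(a) = ISI(a)/P(a)$: it is algebraically short but hinges on disposing correctly of the boundary fluxes of $\varphi$ at both $v=1$ and $v\to -\infty$, and any sloppiness in the reflecting condition would silently break the match with $S$. Everything else is essentially product-rule bookkeeping combined with the PDE (\ref{fpt}) satisfied by the first-passage density.
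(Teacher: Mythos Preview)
Your proposal is correct and follows the same direct-substitution strategy as the paper, which for this theorem merely remarks that the verification ``is easy to check'' and carries out the explicit product-rule computation only in the proof of the non-autonomous generalization (Theorem~\ref{Transf}). Your introduction of the auxiliary $N=n/P$ is a harmless notational streamlining of that same computation, and your derivation of $P'(a)=-ISI(a)$ from the boundary fluxes is exactly the missing detail the paper leaves implicit.
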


It is easy to check that the function defined by (\ref{sepsol}) is indeed a solution to the system (\ref{model2'})-(\ref{icm1}).
Moreover, by integrating this function with respect to $a$ over the respective state space,
we recover the transform of the solution to AS model into the solution of the
FP model in case of time independent stimuli \cite{usdt}.
The recovery of the solution to the AS system in this case is trivial, reducing it by integration
of (\ref{sepsol}) with respect to v over the respective state space, to the the tautology $n=n$, by taking into
account that the survivor function $P$ is given by (\ref{sf}).

\subsection{A special solution for the non-autonomous case}
In fact, the result from the previous subsection can be adapted for the time dependent
stimulus case, given that a corresponding first passage time problem is correctly defined.

Let us define, therefore, the probability density for a neuron to be at time $t$ in the
state $(a,v)$, given that at the firing time $t-a$ it was in the state $(0,v_r)$ and denote it by $\varphi(t,a,v)$. Then, the corresponding FP equation is given again by (\ref{model2'}), but with a different boundary condition:
\be\label{bcfp}
\varphi(t,0,v)=\delta(v-v_r).
\ee
The rest of the boundary conditions are the same as above:
\bda
&&\lim\limits_{v \to -\infty} (-\mu(t) +v)\varphi(t,a,v)
+\frac{ \sigma ^2}{2}\frac{\partial }{\partial v}\varphi(t,a,v) =0,\\
\eda
\bd
\varphi(t,a,1) =0.\\
\ed
As initial condition, we consider
\bd
\varphi(0,a,v)=\varphi_0(a,v)
\ed
with $\varphi_0(a,v)$ solution to the first passage time problem for the constant in time stimulus ($\mu\equiv\mu(0)$) (\ref{fpt})-(\ref{1}).\\
The survivor function is then by definition:
\be
P(t,a)=\int_{-\infty}^1 \varphi(t,a,v)\dd v,
\ee
and the corresponding ISI function is given by
\be
ISI(t,a)=-\frac{\sigma^2}{2}\frac{\partial}{\partial v}\varphi(t,a,1).
\ee
It checks out immediately that
\bd
P(t,0)=1,
\ed
which is natural given the interpretation of the survivor function.
A corresponding relation between the survivor function and the ISI function takes place,
namely,
\be
ISI(t,a)=-\frac{\rm{D}}{\rm{Dt}} P(t,a)=-\left(\frac{\partial}{\partial t}P(t,a)+
\frac{\partial}{\partial a}P(t,a)\right),
\ee
where $\frac{\rm{D}}{\rm{Dt}}$ is the total derivative with respect to $t$.
The hazard rate is then defined as
\be\label{hr2a}
S(t,a)=\frac{ISI(t,a)}{P(t,a)}.
\ee
Let us consider now the system (\ref{a3})-{\ref{fra2}), this time with the hazard rate
defined by (\ref{hr2a}). Then, the following result holds:
\begin{theorem}\label{Transf}
The function defined as
\be
\pi(t,a,v)=\frac{\varphi(t,a,v)}{P(t,a)}n(t,a)
\ee
is a solution to the system (\ref{model2'})-(\ref{fr}), where $n(t,a)$ is a solution to
(\ref{a3})-(\ref{fra2}).
\end{theorem}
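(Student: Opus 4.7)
The plan is to substitute the ansatz $\pi = \varphi\, n/P$ directly into (\ref{model2'}) and verify the boundary conditions one by one. The crucial structural observation is that $\varphi(t,a,v)$ satisfies the same PDE as $\pi$ (with the same drift and diffusion in $v$), while the factor $f(t,a) := n(t,a)/P(t,a)$ depends only on $t$ and $a$. Hence all $v$-derivatives act only on $\varphi$, and applying the product rule to the time and age derivatives leaves
\bd
\Bigl(\partial_t + \partial_a + \partial_v[(\mu-v)\,\cdot\,] - \tfrac{\sigma^2}{2}\partial_{vv}\Bigr)\pi \;=\; f\cdot\underbrace{\Bigl(\partial_t + \partial_a + \partial_v[(\mu-v)\,\cdot\,] - \tfrac{\sigma^2}{2}\partial_{vv}\Bigr)\varphi}_{=0} \;+\; \varphi\bigl(\partial_t f + \partial_a f\bigr).
\ed
So it suffices to show that $f$ is transported along the age characteristics, i.e.\ $\partial_t f + \partial_a f = 0$.

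For that I would compute $D_t f = (D_t n)/P - n (D_t P)/P^2$, where $D_t = \partial_t + \partial_a$. From the age-structured equation (\ref{a3}) one has $D_t n = -S(t,a) n(t,a)$, and from the relation linking $ISI$ and $P$ stated just above the definition of $S$ in the excerpt, $D_t P = -ISI(t,a)$. Substituting and using the definition $S = ISI/P$ from (\ref{hr2a}) yields $D_t f = -Sn/P + n\,ISI/P^2 = -Sn/P + Sn/P = 0$. This closes the PDE verification.

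It then remains to check the boundary conditions. The absorbing condition $\pi(t,a,1)=0$ is immediate since $\varphi(t,a,1)=0$; the reflecting condition at $v\to-\infty$ factors out $n/P$ and reduces to the corresponding condition on $\varphi$. The non-local boundary (\ref{fr}) is the most delicate point: using $\varphi(t,0,v) = \delta(v-v_r)$ and $P(t,0)=1$ gives $\pi(t,0,v) = \delta(v-v_r)\,n(t,0)$, so I must identify $n(t,0)$ with $r(t) = \int_0^\infty \rho(t,a)\,\dd a$. A direct computation of $\rho$ for the ansatz gives $\rho(t,a) = -\tfrac{\sigma^2}{2}(n/P)\,\partial_v\varphi(t,a,1) = (n/P)\cdot ISI(t,a) = S(t,a)\,n(t,a)$, so that $\int_0^\infty \rho\,\dd a = \int_0^\infty S\,n\,\dd a = n(t,0)$ by (\ref{fra2}), as required.

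I expect the only real obstacle to be the compatibility identity $\partial_t f + \partial_a f = 0$: everything else is a matching of boundary data, but that identity is precisely the place where the specific choice of hazard rate $S = ISI/P$ in the non-autonomous setting becomes essential. Once it is established, the rest of the proof is a straightforward bookkeeping exercise in the spirit of the stationary case (\ref{sepsol}).
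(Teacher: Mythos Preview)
Your proof is correct and follows essentially the same route as the paper: both substitute $\pi=\varphi\,n/P$ into the operator, use that $\varphi$ satisfies the same PDE, and combine $D_tP=-ISI$ with the AS equation for $n$ to kill the remaining terms. Your packaging via the transport identity $D_t(n/P)=0$ is a slightly cleaner version of the paper's direct expand-and-regroup computation, and your boundary checks (including the identification $\rho=S\,n$ and hence $r(t)=n(t,0)$) coincide with those given there.
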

\begin{proof}
The proof is, again, straightforward. Let us formally check that equation (\ref{model2'}) is satisfied:
\bda
&&\frac{\partial}{\partial t}\left(\frac{\varphi(t,a,v)}{P(t,a)}n(t,a)\right)
+\frac{\partial}{\partial a}\left(\frac{\varphi(t,a,v)}{P(t,a)}n(t,a)\right)\\
&&+\frac{\partial}{\partial v}\left[(\mu(t)-v)\frac{\varphi(t,a,v)}{P(t,a)}n(t,a)\right]
-\frac{\sigma^2}{2}\frac{\partial^2}{\partial v^2}\left(\frac{\varphi(t,a,v)}{P(t,a)}n(t,a)\right)\\
&&=\left(\frac{\partial}{\partial t}\varphi(t,a,v)+\frac{\partial}{\partial a}\varphi(t,a,v)\right)\frac{n(t,a)}{P(t,a)}\\
&&+\left(\frac{\partial}{\partial t}n(t,a)+\frac{\partial}{\partial a}n(t,a)\right)\frac{\varphi(t,a,v)}{P(t,a)}\\
&&-\frac{\left(\frac{\partial}{\partial t}P(t,a)+\frac{\partial}{\partial a}P(t,a))\right)
}{P(t,a)}\frac{\varphi(t,a,v)}{P(t,a)}\\
&&+\frac{\partial}{\partial v}\left[(\mu(t)-v)\varphi(t,a,v)\right]\frac{n(t,a)}{P(t,a)}
-\frac{\sigma^2}{2}\frac{\partial^2}{\partial v^2}\varphi(t,a,v)\frac{n(t,a)}{P(t,a)}\\
&&=\frac{n(t,a)}{P(t,a)}\left(\frac{\partial}{\partial t}\varphi(t,a,v)+\frac{\partial}{\partial a}\varphi(t,a,v)+\frac{\partial}{\partial v}[(\mu(t)-v)\varphi(t,a,v)]
-\frac{\sigma^2}{2}\frac{\partial^2}{\partial v^2}\varphi(t,a,v)\right)\\
&&-\frac{\varphi(t,a,v)}{P(t,a)}\left(\frac{\partial}{\partial t}n(t,a)+\frac{\partial}{\partial a}n(t,a)+S(t,a)n(t,a)\right)\\
&&=0.
\eda
The boundary condition at $a=0$ checks out immediately:
\bd
\pi(t,0,v)=\frac{\varphi(t,0,v)}{P(t,0)}n(t,0)=\delta(v-v_r)r(t),
\ed
where
\bda
r(t)&=&\int_0^\infty S(t,a)n(t,a)\dd a=\int_0^\infty \frac{n(t,a)}{P(t,a)}\left(-\frac{\sigma^2}{2}\frac{\partial}{\partial v}\varphi(t,a,1)\right)\dd a\\
&=&\int_0^\infty \rho(t,a)\dd a.
\eda

\end{proof}

Note that Theorem \ref{Transf} does allow an actual integral transform of the solution to the AS system into the solution of the FP equation, similar to those obtained in \cite{usdt,usit}:
\be
p(t,v)=\int_0^\infty\frac{\varphi(t,a,v)}{P(t,a)}n(t,a)\dd a,
\ee
if we assume that initial repartition is given by
\bd
\pi_0(a,v)=\frac{\varphi_0(a,v)}{P(a)}n_0(a),
\ed
with $\varphi_0$ defined above,
so that:
\bd
p_0(v)=\int_0^\infty \pi_0(a,v)\dd a.
\ed
\section{Discussion}

The present paper is thought as a continuation of our previous work \cite{usdt,usit},
where we have uncovered a connection between the NLIF and the escape rate models in
the context of a time independent stimulus. We proved  there that the corresponding solutions to the FP equation and the AS
 system can be mapped one-to-another via integral transforms. In the present paper, we wish to go further by
 considering stimuli that are time dependent. Toward this end, we introduced a new stochastic model and we showed
 that starting from it, it is possible to recover both the NLIF and the escape rate models.

The importance of our finding is mostly theoretical. Our conclusion is that it is always possible to adjust parameters
such that the NLIF and the escape rate models to exhibit the same statistics. This finding enables us to find one
possible explanation for the reported similarities between the two frameworks \cite{plesser}.
The choice of using any of it would then be influenced only by which of
the variables - age or membrane potential - one wants to take into account.

Although the FP equation in neuronal dynamics context has been studied over the past few years and qualitative results regarding its solutions have
 been proven \cite{carillo01, csfp}, the AS systems theory has received a lot of attention
 in the last decades, see monographs \cite{iannelli, webb} for example,
 and different associated control problems have been considered \cite{anita}. There is, therefore, a big
 advantage from a mathematical point of view to privilege its use in neuronal modeling.

Let us finally conclude by saying that our work opens several pathways for future research. From our side, the
most exciting one is the investigation of emergent properties of neural networks. Both formalisms (FP and AS)
have been employed to describe neural circuits in the mean-field approximation (see \cite{B02, BH01, carillo01} for the FP and see \cite{MvV, chevalier,perthame03, perthame04} for the AS), and both of them have
generated key insights in neuroscience, especially in the quest of the underlying mechanism of neural
synchronization \cite{DH01,perthame04}. It would therefore be relevant to use our new framework to investigate the links and
differences between the two approaches, especially when conclusions are contradictory.

\bibliographystyle{plain}
\bibliography{source_system}

\end{document}